\newcommand{\R}{\mathbb R}
\newcommand{\N}{\mathbb N}
\DeclareMathOperator{\supp}{supp}
\newcommand{\p}{\partial}
\newcommand{\vp}{\varphi}
\newcommand{\atopp}[2]{\genfrac{}{}{0pt}{2}{#1}{#2}}
\newcommand{\nn}{\nonumber}
\newcommand{\la}{\langle}
\newcommand{\ra}{\rangle}
\DeclareMathOperator{\opG}{\mathcal G}
\DeclareMathOperator{\opE}{\mathcal E}
\newcommand{\Om}{\Omega}
\newtheorem{thm}{Theorem}[section]
\newtheorem{prop}[thm]{Proposition}
\newtheorem{lem}[thm]{Lemma}
\theoremstyle{definition}
\newtheorem{defn}[thm]{Definition}
\newtheorem{example}[thm]{Example}
\theoremstyle{remark}
\newtheorem{rem}[thm]{Remark}
\newcommand{\Gqm}{\mathcal{E}^{q,M}}
\DeclareMathOperator{\opF}{\mathcal F}
\newcommand{\Gqs}{\mathcal{G}^{q,s}}
\numberwithin{equation}{section}
\newcommand{\opS}{\mathcal{S}}
\newcommand{\opWF}{\mathcal{WF}}
\newcommand{\opX}{\mathfrak{X}}
\newcommand{\mfs}{\mathfrak{s}}
\newcommand{\mfS}{\mathfrak{S}}
\newcommand{\mfP}{\mathfrak{P}}
\newcommand{\mfp}{\mathfrak{p}}
\newcommand{\mfG}{\mathfrak{G}}
\begin{document}

\title{Distributions with Decay and Restriction Problems}

\author{Gustavo Hoepfner and Andrew Raich}

\thanks{This work was partially supported by FAPESP grant numbers 2017/03825-1 and 2018/02663-0.}

\address{Departamento de Matem\'atica, Universidade Federal de S{\~a}o Carlos, S{\~a}o Carlos, SP 13565-905, Brazil}
\email{hoepfner@dm.ufscar.br}

\address{Department of Mathematical Sciences, SCEN 309, 1 University of Arkansas, Fayetteville, AR 72701}
\email{araich@uark.edu}

\subjclass[2010]{42B10, 35A18, 35A27, 26E10}

\keywords{Fourier transform, global wavefront set, restriction problem, wavefront set, FBI transform, Salem set, Denjoy-Carleman functions}

\begin{abstract}
In this paper we introduce a new type of restriction problem, called the \textit{restriction problem with moments}.
We show that the  surface area measure of the sphere satisfies the $L^p$-$L^2$ restriction problem with
moments if $1 \leq p < \frac{2(d+2)}{d+3}$ and that the Frostman measure constructed by
Salem satisfies the $L^p$-$L^2$ restriction problem with moments if $1 \leq p < \frac{2(2-2\alpha+\beta)}{4(1-\alpha)+\beta}$ for certain values of
$\alpha$ and $\beta$.

The main tool to obtain these new type of restriction phenomenon is the notion of distributions with decay in connection with classes of global $L^q$ ultradifferentiable functions. 
We develop the notion of distributions with decay and 
use it to define global wavefront sets of classes of function spaces, including
$L^p$-Sobolev spaces on $\mathbb{R}^d$ as well as global $L^q$-Denjoy Carleman functions. 
We also introduce the corresponding notion of microglobal regularity. We prove 
a characterization of distributions (in a given function space) with decay in terms of microglobal regularity in every direction of their Fourier transforms.
\end{abstract}

\maketitle

%
%
\section{Introduction}\label{sec:intro}

Our motivating problem is the following: 
given a function space $\opX \subset \opS'(\R^d)$,  determine necessary and sufficient conditions on $\hat T \in \opS'(\R^d)$
so that $T\in \opX$, that is, to understand when
\begin{equation}\label{eqn:T = hat psi}
T = \hat\psi \quad \text{for some}\quad \psi\in \opX.
\end{equation}
This question leads us naturally to define
and explore a microglobal wavefront set defined in terms of the Fourier transform. Additionally, we apply this new technology
to the classical restriction problem and prove a new type of restriction phenomenon that we call the restriction problem with moments.
Essentially, the restriction problem with moments is the restriction problem applied to a measure (as in the classical case) and moments of the measure.
Moments of the measure correspond to derivatives of the Fourier transform, so we are, in effect, showing that the restriction problem can hold for 
certain tempered distributions. This is a new phenomenon.

The origin of our investigation is the paper of Boggess and Raich \cite{BoRa13h} in which they find conditions on the Fourier transform
of a function $f$ that guarantees that $f$ exhibits a specific type of exponential decay. This led to the development of function classes 
to explore this phenomenon more closely which in turn led to defining global $L^q$ Gevrey and global $L^q$ Denjoy-Carleman functions
\cite{AdHoRa17,HoRa19}. Our hope was to use these function classes to explore global properties of partial differential operators,
but we discovered that the Fourier transforms of such functions can be highly nonsmooth (e.g.,
a measure supported on a Salem set). To overcome
this difficulty, we substituted the FBI transform for the FBI transform and developed microglobal tools to analyze operators \cite{HoRa19g}.
While this program was successful for certain classes of problem, many objects are naturally analyzed with Fourier transforms, so in this paper, we
develop a microglobal analysis based on a theory of distributions with decay.

The Fourier transform interchanges smoothness with decay so function spaces defined in terms of smoothness estimates,
e.g., Sobolev spaces, global $L^q$ Denjoy-Carleman spaces, etc., ought to have characterizations in terms of the 
decay of the Fourier transform.
One complicating factor is that integrability conditions at $\infty$ are decay conditions, hence the Fourier transform
of an $L^1$ function is smoother than that of an $L^q$ function, $q>1$. Additionally,  Fourier transforms
of $L^q$ functions, regardless of their smoothness, need not be functions at all \cite{HoRa19}. In fact, our viewpoint
of restriction theorems is that Fourier transforms of $L^q$ functions, $1 \leq q < 2$ have sufficient additional
smoothness to allow them restrict to the support of a measure supported on an appropriate lower dimensional subset.

We work with two objects.
\begin{enumerate}
\item Distributions with decay. These distributions  capture the Fourier transforms of functions in
$\opX$.
\item The (Fourier) microglobal wavefront set. In \cite{HoRa19g}, we defined a notion of the wavefront set based on 
a global FBI transform. The FBI transform is difficult to compute and for objects naturally defined as tempered
distributions, the Fourier transform is more natural. Consequently, we develop a notion of microglobal regularity
and a Fourier transform based microglobal wavefront set.
\end{enumerate}

In the second part of the paper (Section \ref{sec:restriction}), 
we apply the distribution with decay machinery to measures supported on lower dimensional
subsets of $\R^d$, specifically, the Salem measure and the surface area measure of the sphere. By proving that the Fourier
transforms of these measures are certain global $L^q$-Gevrey functions, we can show that these measures, as well as all of their
moments satisfy the $L^p$-$L^2$ restriction problem for a range of $p$ that we compute.
We call this the$L^p$-$L^2$ restriction problem with moments. It is a generalization of and a stronger result than
the classical $L^p$-$L^2$ restriction problem.
Additionally, by keeping track of
the decay of the derivatives of the transform and the fact that the transform originate from a compactly supported measure, we are able to establish
the range  $1 \leq p < \frac{2(d+2)}{d+3}$ for the sphere and $1 \leq p <  \frac{2(2-2\alpha+\beta)}{4(1-\alpha)+\beta}$
for Salem's measure.
This is a new type of restriction theorem and shows that certain tempered distributions may also satisfy the restriction problem.

\subsection{Acknowledgements}
The authors would like to thank Malabika Pramanik, Andreas Seeger, and Jim Wright for looking at an early draft the paper and encouraging us to improve the results 
on the restriction problems.

%
%
\section{General Function Classes}
In order to motivate Definition \ref{defn:dist with decay}, we provide several examples.
\begin{example}[$L^q$ functions with polynomial decay of order $k$]
\label{ex:L^q with decay} Let $f\in L^q(\R^d)$, $1 < q < \infty$,  so that
\begin{equation}\label{eq:POLdecay}
\|x^\alpha f\|_{L^q(\R^d)} \leq C, \qquad \text{for all } |\alpha| \leq k.
\end{equation}
We  call such a function an \emph{$L^q$ function with polynomial decay of order $k$}. By H\"older's inequality and duality,
\eqref{eq:POLdecay} is equivalent to
\[
|\la x^\alpha f , \vp \ra| \leq C \|\vp\|_{L^p(\R^d)}
\]
for all $\vp \in \opS(\R^d)$.
\end{example}

\begin{example}[Fourier transforms of functions in a Sobolev space] \label{ex:W^k,q}
Let $1\leq q \leq \infty$, $k\geq 1$, and $f \in W^{k,q}(\R^d)$. Then if $|\alpha|\leq k$ and $\vp\in\opS(\R^d)$
\[
|\la \xi^\alpha \hat f, \vp \ra| = |\la D^\alpha f, \hat\vp\ra| \leq \|f\|_{W^{k,q}(\R^d)} \|\hat\vp\|_{L^p(\R^d)}
\]
where $\frac 1q + \frac 1p =1$. In this case, $\hat f$ is a distribution with decay, but it is fundamentally different than an $L^q$ function with decay because 
there is no reason to believe that $\hat f \in L^p(\R^d)$ (or even a function), unless, of course, $1 \leq q \leq 2$. Additionally, 
the decay is only apparent when
measured against $\hat\vp$. In this case, we say that
$\hat f$ is a \emph{distribution with polynomial decay of order $k$ with respect to the Fourier transform}.
\end{example}

\begin{example}[Fourier transforms of global $L^q$-Gevrey functions]
 \label{ex: L^q global Gevrey}
Let $1 \leq q \leq \infty$. We say that $f$ is a global $L^q$-Gevrey function and write  and $f \in \Gqs(\R^d)$ 
(see \eqref{eq:EqM} below for a more general framework for these function spaces) if
there exist constants $C,A>0$ so that $\|D^\alpha f\|_{L^q(\R^d)}\leq CA^{|\alpha|}|\alpha|^{|\alpha|s}$.
If $\vp \in \opS(\R^d)$, then
\[
|\la \xi^\alpha \hat f, \vp \ra| \leq CA^{|\alpha|}|\alpha|^{|\alpha|s} \|\hat \vp\|_{L^p(\R^d)}.
\]
This shows that $\hat f$ is also a distribution with decay with respect to the Fourier transform, and we now introduce a notation to quantify it.
\end{example}

\subsection{Semi-norms and function spaces}
In order to capture the widest array of function spaces and  tempered distributions, we define our spaces 
using the general classes of semi-norms that define the Schwartz class.

One of the most important features of the Fourier transform is its exchange of smoothness for decay, and vice versa, and this feature underlies all Paley-Wiener Theorems. Exactly how to capture
 this interplay is challenging, and we focus on using classes of semi-norms. 
One of the flexible features of $\opS(\R^d)$ is that it can be defined using $L^p$ norms for
any $1 \leq p \leq \infty$, either on the function side or the Fourier transform side.
In \cite{BoRa13h,AdHoRa17}, this type of duality was used to capture exponential decay in terms of inequalities on the Fourier transform.

We notate a \emph{parameter set} $\mfS$ by 
\[
\mfS = \big\{(\alpha,\beta,q) : \alpha,\beta\in\N_0^d, 1 \leq q \leq \infty\big\}
\]
and for simplicity, we assume that $q$ is fixed over the parameter set. We impose the additional requirement that if
$(\alpha,\beta,q)\in\mfS$, then so is $(\alpha',\beta',q)$ for all $\alpha'\subset\alpha$ and $\beta'\subset\beta$.
Given a parameter set $\mfS$, we also have an associated \emph{growth set} 
\[
\mfG = \{a_{\mfp}: \mfp\in\mfS\} \subset (0,\infty)
\]
and call the pair $(\mfS,\mfG)$ a \emph{parameter pair}. 
For an element $(\mfs,a_\mfs) \in (\mfS,\mfG)$ and $f\in\opS(\R^d)$, define the semi-norm
\[
\rho(f;\mfs;a_\mfs) = \rho_{\mfs,a_{\mfs}}(f) 
=  a_{\mfs} \|x^\beta D^\alpha f\|_{L^q(\R^d)}.
\]

For a more general class, we could have our norms defined on $\Om\subset\R^d$ and also include compact sets $K\subset\Om$ with
$\mfS = \{(\alpha,q,\Om,K) : \alpha,\beta \in\N_0^d, 1 \leq q \leq \infty, \Om\subset\R^d\}$
and $\rho(f;\mfs;a_\mfs) = a_{\mfs} \|x^\beta D^\alpha f\|_{L^q(K)}$. This would have allowed us to
recover the classical Denjoy-Carleman spaces, i.e., with $q=\infty$, but we do not pursue these
spaces in this work because 
it is known that these spaces possess a characterization via the Fourier transform, see \cite{HoMe18}.

Our interest is in function spaces $\opX\subset\opS'(\R^d)$ that are defined by
a finite or countable collection of semi-norms
$\rho_{\mfs,a_{\mfs}}$ or an increasing/decreasing union of such spaces. 

For concreteness,  we are going to concentrate on
the cases when
$\opX$ is defined by a finite collection of semi-norms (e.g., Examples \ref{ex:L^q with decay} and \ref{ex:W^k,q}), a global  $L^q$ Gevrey space \cite{AdHoRa17} (e.g., Example \ref{ex: L^q global Gevrey}),
or a global $L^q$ Denjoy-Carleman space as in  \cite{HoRa19g} (see (\ref{eq:EqM}) below). 

If $\opX$ is defined by a finite index set $\mfS$,
\begin{equation}\label{eqn:X finite}
\opX = \{ f \in \opS'(\R^d) : \rho(f;\alpha,\beta,q;C) < \infty,\ \text{for all }(\alpha,\beta,q)\in\mfS \text{ and some }C>0\},
\end{equation}
and the growth set $\mfG$ plays no role.

For function spaces $\opX$ defined by an infinite collection of semi-norms $\mfS$,
we  concentrate on function classes for which there exist increasing sequences of positive numbers 
$M=(M_j)_{j\in\N_0}$ and $M'=(M_j')_{j'\in\N_0}$
so that
\begin{equation}\label{eq:faster}
j! \le \min\{M_j,M_j'\}\qquad \forall\, j\in\N_0,
\end{equation}
\begin{equation}
\opX_A = \Bigg\{ f \in \opS'(\R^d) : 
\bigg\{ \frac{\|x^\beta D^\alpha f\|_{L^q(\R^d)}}{A^{|\alpha|+|\beta|}M_{|\alpha|}M'_{|\beta|}}:
(\alpha,\beta,q)\in\mfS\bigg\}
\in \ell^q \Bigg\} \label{eqn:opX_A} 
\end{equation}
and
\begin{align}
\opX &= \bigcup_{A>0} \opX_A \nn\\
&= \Big\{ f \in \opS'(\R^d) : \text{ there exist }C,A>0 \text{ so that } \nn\\
&\hskip3cm 
\|x^\beta D^\alpha f\|_{L^q(\R^d)} \leq C A^{|\alpha|+|\beta|}
M_{|\alpha|}M'_{|\beta|} \|f\|_{L^q(\R^d)},\ (\alpha,\beta,q,j)\in\mfS\Big\}.
\label{eqn:opX infinite}  
\end{align}

\begin{rem}\label{rem:differentiation,mult switching}
Since the number of terms with $\alpha'\subset\alpha$ 
and $\beta'\subset\beta$ grows geometrically in $|\alpha|$ and $|\beta|$ and $(M_k)_{k\in\N_0}$
is an increasing sequence, we can estimate
$D^\beta\{x^\alpha f\}$
without changing $M_k$ but merely increasing the geometric constant $A$.
\end{rem}

\subsection{Distributions with decay}
%
\begin{defn}\label{defn:dist with decay}
Let $T \in \opS'(\R^d)$ and $(\mfP,\mfG)$ be a parameter pair. We say that $T$ is a 
\emph{tempered distribution with $(\mfP,\mfG)$ decay} if for all $\vp\in\opS(\R^d)$ it holds that
\begin{equation}\label{eqn:dist decay}
\Big\{\frac{|\la x^\beta D^\alpha T,   \vp\ra|}{c_\mfp \|\vp\|_{L^p(\R^d)}} : (\alpha,\beta,q) = \mfp \in \mfP,\ c_\mfp \in \mfG \Big\} \in \ell^q
\end{equation}
and that $T$ is a \emph{tempered distribution with $(\mfP,\mfG)$ decay in the direction $x\neq 0$} if there exists
an open cone $\Gamma$ containing $x$  so that \eqref{eqn:dist decay} holds for all $\vp\in\opS(\R^d)$ with
$\supp\vp \subset \Gamma$.

We say that $T$ is a \emph{tempered distribution with $(\mfP,\mfG)$ decay at $x=0$}  if \eqref{eqn:dist decay} holds with $\Gamma$ replaced by $B(0,1)$.

Also, we say that $T$ is a
\emph{tempered distribution with $(\mfP,\mfG)$ decay with respect to the Fourier transform} if
\begin{equation}\label{eqn:dist decay, Ft}
\Big\{\frac{|\la x^\beta D^\alpha T,   \vp\ra|}{c_\mfp \|\hat\vp\|_{L^q(\R^d)}} : (\alpha,\beta,q) =\mfp \in \mfP,\ c_\mfp \in \mfG \Big\} \in \ell^q
\end{equation}
and that $T$ is a \emph{tempered distribution with $(\mfP,\mfG)$ decay in the direction $x\neq0$ (resp., $x=0$)) with respect to the
Fourier transform} if there exists
an open cone $\Gamma$ containing $x$ (resp., $B(0,1)$) so that \eqref{eqn:dist decay, Ft} holds for all $\vp\in\opS(\R^d)$ with
$\supp\vp \subset \Gamma$ (resp., $\supp\vp\subset B(0,1)$).
\end{defn}

To motivate the correct notion of microglobal regularity, observe that if $f \in \Gqs(\R^d)$, then
\[
|\la \xi^\alpha \hat f, \check \vp \ra| = |\la D^\alpha f, \vp \ra | \leq C A^{|\alpha|} |\alpha|^{|\alpha|s}\|\vp\|_{L^p(\R^d)}
\]

Given a parameter set $\mfP$, we define the \emph{dual parameter set} $\hat\mfP$ by
$\hat\mfP = \{(\alpha,\beta,q) \in \N_0\times\N_0\times[1,\infty] : (\beta,\alpha,p)\in \mfP\}$ where $\frac 1p + \frac 1q=1$.

\begin{defn}\label{defn:Fourier wavefront, X}
Let $T \in \opS'(\R^d)$, $\xi\in\R^d$, and $(\mfP,\mfG)$ be a parameter pair. We say that $T$ is 
\emph{Fourier microglobally regular in the direction $\xi\in\R^d$ with respect to the parameter pair $(\mfP,\mfG)$} 
(or simply Fourier $(\mfP,\mfG)$-microglobal regular in the direction $\xi\in\R^d$), 
if $\hat T$ is a tempered distribution with $(\hat\mfP,\mfG)$ decay in the direction $\xi$ (possibly with respect
to the Fourier transform).

We define the \emph{Fourier wavefront set of $T$ with respect to the parameter pair $(\mfP,\mfG)$}
or the Fourier $(\mfP,\mfG)$-wavefront set, denoted by  $\opWF_{\mfP,\mfG}(T)$, by
\[
\opWF_{\mfP,\mfG}(T) = \{\xi\in\R^d : T \text{ is not Fourier $(\mfP,\mfG)$-microglobal regular in the direction } \xi\}.
\]
If $\opX$ is a function space defined by the parameter pair $(\mfP,\mfG)$, that is, 
\[
\opX = \{ f\in L^q(\R^d) : \rho(f;\mfs;c_{\mfs}) < \infty \text{ for all }\mfs\in\mfP\}
\]
then we will call the $(\mfP,\mfG)$-wavefront set by \emph{$\opX$-wavefront set} and denote it by $\opWF_{\opX}$. Similarly,
we say that a distribution is $\opX$ microglobal regular in the direction $\xi$. Finally, for a given space $\opX$, whether or not the notion
of wavefront set, decay, etc. is taken with respect to the Fourier transform  is determined from the definition of $\opX$.
\end{defn}

\begin{thm}\label{thm:Gevrey and FT, general X}
Let $T\in \opS'(\R^d)$, $1 < q < \infty$, and $\opX\subset\opS'(\R^d)$ be defined by the parameter pair $(\mfS,\mfG)$. 
There exists $\psi \in \opX$
so that $\hat T = \psi$ if and only if the $\opWF_{\opX}(T) = \emptyset$.
\end{thm}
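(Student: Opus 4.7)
The plan is to prove the two implications separately.

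\emph{Forward direction.} Suppose $\hat T = \psi$ with $\psi \in \opX$. The definition of $\opX$ provides semi-norm bounds $c_\mfs\|x^{\beta'}D^{\alpha'}\psi\|_{L^{q'}}$ for $\mfs=(\alpha',\beta',q')\in\mfS$. Writing $\la x^\beta D^\alpha\hat T,\vp\ra=\la x^\beta D^\alpha\psi,\vp\ra$, one integrates by parts (or passes to the Fourier side via Parseval and the identities $\widehat{D^\alpha g}=c\xi^\alpha\hat g$ and $\widehat{x^\beta g}=c'D^\beta\hat g$, depending on whether the $\opX$ semi-norms are physical-side or Fourier-side) and applies H\"older's inequality to control the pairing by $\|\vp\|_{L^p}$ or $\|\hat\vp\|_{L^q}$. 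The involution $(\alpha,\beta,q)\leftrightarrow(\beta,\alpha,p)$ defining $\hat\mfS$ is precisely what matches the Lebesgue exponents on the two sides, and for $\opX$ of the form (\ref{eqn:opX infinite}) the product structure $M_{|\alpha|}M'_{|\beta|}$ of the growth weights preserves the $\ell^q$-summability under this involution. Because these estimates are global they hold for $\vp$ supported in any cone, so $\opWF_{\opX}(T)=\emptyset$.

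\emph{Converse.} Suppose $\opWF_{\opX}(T)=\emptyset$. For each $\xi\in S^{d-1}$ pick an open cone $\Gamma_\xi$ containing $\xi$ on which $\hat T$ satisfies directional decay, and use the $\xi=0$ version to produce the ball $B(0,1)$. By compactness of $S^{d-1}$, finitely many cones $\Gamma_1,\ldots,\Gamma_N$ together with $B(0,1)$ cover $\R^d$; a smooth partition of unity $\chi_0,\ldots,\chi_N$ subordinate to this cover splits any $\vp\in\opS(\R^d)$ as $\vp=\sum_j\chi_j\vp$, each piece supported in one element of the cover. Applying the local decay to each $\chi_j\vp$ and summing gives
\[
|\la x^\beta D^\alpha\hat T,\vp\ra|\les c_\mfp\|\vp\|_{L^p}
\]
(or the Fourier-side analogue), uniformly for $\mfp=(\alpha,\beta,q)\in\hat\mfS$, with the $\ell^q$ summability against $\mfG$ preserved after absorbing the constants $\|\chi_j\|_{L^\infty}$ into a rescaled growth set. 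Taking $\alpha=\beta=0$ and using density of $\opS(\R^d)$ in $L^p(\R^d)$ (valid since $1<q<\infty$), Hahn-Banach extends $\hat T$ to a continuous linear functional on $L^p(\R^d)$, and the Riesz representation theorem produces $\psi\in L^q(\R^d)$ with $\hat T=\psi$. The remaining global estimates become $|\la x^\beta D^\alpha\psi,\vp\ra|\les c_\mfp\|\vp\|_{L^p}$, which dualize via $L^q=(L^p)^*$ to $\|x^\beta D^\alpha\psi\|_{L^q}\les c_\mfp$; reorganizing these along $\hat\mfS\leftrightarrow\mfS$ and invoking Remark \ref{rem:differentiation,mult switching} to reorder $x^{\beta}$ and $D^{\alpha}$ at the cost of a geometric constant recovers the defining $\ell^q$ semi-norm control for $\opX$, so $\psi\in\opX$.

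The main obstacle is the passage from directional to global decay while preserving the $\ell^q$ summability uniformly across a potentially infinite parameter set such as (\ref{eqn:opX infinite}): the partition of unity contributes multiplicative constants that must be uniformly absorbed into a rescaled growth set. A secondary difficulty lies in the Fourier-side variant of decay, where one must carefully track whether cones live in physical or frequency space and ensure that Parseval, H\"older, and duality are applied on the correct side so that the Lebesgue exponents remain consistent with the involution defining $\hat\mfS$.
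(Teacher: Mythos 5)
Your overall architecture — integration by parts and duality for the forward implication, then partition of unity on the sphere plus density of $\opS$ in $L^p$, Hahn--Banach, and Riesz representation for the converse — matches the paper's strategy, although the paper splits the work between Theorem \ref{thm:Gevrey and FT, general X} (whose written proof in fact starts from the \emph{global} decay hypothesis \eqref{eqn:dist decay, Ft}) and Theorem \ref{thm:Gevrey and microglobally regular, general X} (which supplies the directional-to-global patching via partition of unity). So you are correctly combining the two arguments.

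There is one step in your patching that is not quite resolved, and you flag it yourself as a ``secondary difficulty'' without supplying the missing tool. When the decay in Definition \ref{defn:dist with decay} is taken \emph{with respect to the Fourier transform}, the directional estimate controls $|\la x^\beta D^\alpha\hat T,\chi_j\vp\ra|$ by $\|\opF(\chi_j\vp)\|$ rather than by $\|\chi_j\vp\|$. One cannot pass to $\|\hat\vp\|$ by ``absorbing $\|\chi_j\|_{L^\infty}$''; the relevant bound is
\[
\|\opF\{\chi_j\vp\}\|_{L^p(\R^d)} = \|\widehat{\chi_j}*\hat\vp\|_{L^p(\R^d)} \leq C\,\|\hat\vp\|_{L^p(\R^d)},
\]
which holds precisely because each $\chi_j$ (smooth, homogeneous of degree $0$ away from the origin, cut off near $0$) is a Mikhlin multiplier, so that convolution against $\widehat{\chi_j}$ is bounded on $L^p$ for $1<p<\infty$. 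This is the device the paper uses in the proof of Theorem \ref{thm:Gevrey and microglobally regular, general X}, and it is the reason the decomposition there is applied to $\check\vp$ rather than $\vp$: the cones live on the frequency side and the multiplier theorem transfers the estimate back. Without invoking multiplier boundedness this step does not close, since $\|\opF(\chi_j\vp)\|$ is not controlled by $\|\chi_j\|_{L^\infty}\|\hat\vp\|$ in general.

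A smaller point: after Riesz representation produces $\psi_{\alpha,\beta}\in L^q$ for each parameter, you still need the distributional identity $\psi_{\alpha,\beta}=x^\beta D^\alpha\psi_{0,0}$ in order to read the bounds $\|\psi_{\alpha,\beta}\|_{L^q}\leq c_\mfp$ as semi-norm bounds on a single function $\psi_{0,0}$. You state the resulting estimate, but the verification — pairing against $D^\alpha\vp$ and using density to identify the derivative, exactly as the paper does in its ``claim'' — should be made explicit; it is short but not automatic.
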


\begin{proof}
The forward direction follows from Remark \ref{rem:differentiation,mult switching} and the inequality
\[
\big|\big(x^\alpha D^\beta T, \vp\big)\big|
= \big|\big(x^\alpha D^\beta \hat\psi, \vp\big)\big|
= \big|\big(D^\alpha \{x^\beta \psi\}, \hat\vp\big)\big|
\leq \sum_{\beta'\subset\alpha\cap\beta} \binom{\beta}{\beta'} \frac{\beta!}{(\beta-\beta')!}\big(x^{\beta-\beta'} D^{\alpha-\beta'}\psi,\hat\vp\big).
\]
Now suppose that $T \in \opS'(\R^d)$ is a tempered distribution with 
$(\hat\mfS,\mfG)$-decay given by \eqref{eqn:dist decay, Ft}.
We know that $\xi^\alpha$ is a smooth, slowly increasing function, and we use the argument of the forward direction to establish
\[
\big|\big\la x^\beta D^\alpha\hat T,\vp\big\ra\big|
= \big|\big\la D^\beta\{\xi^\alpha T\}, \hat \vp \big\ra\big| 
\leq c_{\alpha,\beta,q} \|\vp\|_{L^p(\R^d)}
\]
since $\hat{\hat\vp}(x) = \vp(-x)$.
Since $1 < p < \infty$, we know that $\opS(\R^d)$ is dense in $L^p(\R^d)$ and that $L^p(\R^d)' = L^q(\R^d)$. Consequently,
\[
\vp \mapsto \big\la x^\beta D^\alpha\hat T,\vp\big\ra
\]
extends to a bounded linear operator on $L^p(\R^d)$. Therefore, there exists $\psi_{\alpha,\beta} \in L^q(\R^d)$ so that
\[
\big\la x^\beta D^\alpha\hat T,\vp\big\ra = \int_{\R^d} \psi_{\alpha,\beta}(\xi) \vp(\xi)\, dV(\xi).
\]
We claim that
\[
\psi_{\alpha,\beta}(\xi) = x^\beta D^\alpha\psi_{0,0}(\xi).
\]
If the claim holds, then
\[
\|x^\beta D^\alpha \psi_{0,0}\|_{L^q(\R^d)} \leq C_{|\alpha|,|\beta|},
\]
for all $(\alpha,\beta,q)\in\mfS$. This would mean $\psi_{0,0}\in \opX$. To prove the claim we first assume $\beta=0$ and 
observe that $\psi_{0,0}\in L^q(\R^d)$ and hence
for any $\vp\in \opS(\R^d)$,
\[
\la D^\alpha\psi_{0,0}, \vp \ra = (-1)^{|\alpha|} \la \psi_{0,0}, D^\alpha\vp\ra = (-1)^{|\alpha|} \la \hat T, D^\alpha\vp \ra
= \la D^\alpha \hat T, \vp \ra = \la \psi_{\alpha,0},\vp\ra.
\]
Again by density, it must be the case that $D^\alpha\psi_{0,0} = \psi_{\alpha,0}$. The proof that 
$\psi_{\alpha,\beta} = x^\beta \psi_{\alpha,0}$ is similar.
\end{proof}
\begin{rem}
\begin{enumerate}[(1)]
\item The forward direction of this proof  holds for $1 \leq q \leq \infty$. It is only the reverse direction
that eliminates $q=1,\infty$ from the theorem.
\item
If $1 < p < 2$, $\frac 1q+\frac 1p = 1$, and $\opX = \Gqs(\R^d)$, 
then the Hausdorff-Young inequality implies that
\[
|\la  \xi^\alpha T, \vp \ra| \leq C A^{|\alpha|}|\alpha|^{|\alpha|s}\|\hat\vp\|_{L^p(\R^d)}  \leq C A^{|\alpha|}|\alpha|^{|\alpha|s} \|\vp\|_{L^q(\R^d)}
\]
which, of course, means that $T$ agrees with a function that decays exponentially in the $L^p$ sense (appropriately defined, see also \cite{BoRa13h,AdHoRa17}).
\end{enumerate}
\end{rem}

\begin{thm}\label{thm:Gevrey and microglobally regular, general X}
Let $T\in \opS'(\R^d)$ and $\opX\subset\opS'(\R^d)$ be a function space defined by the parameter pair $(\mfS,\mfG)$.  
Then $T$ is given by integration against a function  $\psi\in\opX$ if and only if
$T$ is Fourier $\opX$-microglobally regular with respect to $(\mfS,\mfG)$ for all directions $\xi\in\R^d$.
\end{thm}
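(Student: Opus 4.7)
The plan is to mimic Theorem \ref{thm:Gevrey and FT, general X}: the forward implication is a direct Fourier-analytic computation, and the reverse implication uses compactness to upgrade the microglobal hypothesis to a global decay estimate for $\hat T$ and then runs the reverse-direction argument of Theorem \ref{thm:Gevrey and FT, general X} with $T$ and $\hat T$ interchanged.

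For the forward direction, suppose $T = \psi$ with $\psi\in\opX$. For any $\vp \in \opS(\R^d)$, I transfer $x^\beta$ and $D^\alpha$ from $\hat T$ onto the test function via
\[
\la x^\beta D^\alpha \hat T, \vp\ra = (-1)^{|\alpha|}\la\psi, \widehat{D^\alpha(x^\beta\vp)}\ra,
\]
apply the Fourier identities that interchange $D^\alpha$ with multiplication by $\xi^\alpha$ and $x^\beta$ with $D^\beta$, integrate by parts onto $\psi$, and use the Leibniz rule to express the right-hand side as a finite combination of $\la \xi^{\alpha-\beta'}D^{\beta-\beta'}\psi, \hat\vp\ra$ over $\beta' \subset \alpha\cap\beta$. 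H\"older's inequality and the $\opX$-bounds on $\|\xi^{\alpha-\beta'}D^{\beta-\beta'}\psi\|_{L^q}$, combined with Remark \ref{rem:differentiation,mult switching} to absorb the combinatorics into the parameter-pair growth, yield the required decay estimate for $\hat T$ for every $\vp \in \opS(\R^d)$, and in particular for every $\vp$ supported in an open cone.

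For the reverse direction, assume $T$ is Fourier $\opX$-microglobally regular at every $\xi \in \R^d$. At each $\xi \neq 0$ I choose an open cone $\Gamma_\xi$ on which the decay estimate for $\hat T$ holds, and use the ball $B(0,1)$ near $\xi = 0$; compactness of $S^{d-1}$ extracts a finite subcover $\{B(0,1), \Gamma_1, \ldots, \Gamma_N\}$ of $\R^d$. I construct a partition of unity $\chi_0, \chi_1, \ldots, \chi_N$ subordinate to this cover with $\chi_0 \in C_c^\infty(B(0,1))$ and each $\chi_i$ ($i \geq 1$) smooth, supported in $\Gamma_i$, and homogeneous of degree zero outside a neighborhood of the origin, so that $\chi_i$ is slowly increasing and $\chi_i \vp \in \opS(\R^d)$ whenever $\vp \in \opS(\R^d)$. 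Writing $\vp = \sum_i \chi_i \vp$ and applying the microglobal estimate to each $\chi_i \vp$ produces, after controlling $\|\chi_i\vp\|_{L^p}$ (or the Fourier analogue $\|\widehat{\chi_i\vp}\|_{L^q}$) by the corresponding norm of $\vp$ uniformly in $i$, a global decay estimate for $\hat T$ of the form appearing in the hypothesis of the reverse direction of Theorem \ref{thm:Gevrey and FT, general X} with $T$ replaced by $\hat T$. Running that argument produces $\psi \in \opX$ with $\hat{\hat T} = \psi$; since $\hat{\hat T}(x) = T(-x)$ and $\opX$ is reflection invariant, $T$ is given by the function $\psi(-\cdot) \in \opX$.

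The main obstacle is the partition-of-unity construction and the summation step. Because cones are unbounded, compactly supported cutoffs are unavailable, and one must smooth degree-zero homogeneous cutoffs near the origin while preserving support in $\Gamma_i$ and the Schwartz class under multiplication. Equally important is the Fourier-multiplier style bound that controls $\|\widehat{\chi_i\vp}\|_{L^q}$ by $\|\hat\vp\|_{L^q}$ (or $\|\chi_i\vp\|_{L^p}$ by $\|\vp\|_{L^p}$) with constants independent of $(\alpha,\beta)$, so that summing the finitely many microglobal estimates produces a single global estimate with the $\ell^q$-summable growth dictated by $\mfG$. Once this step is in place, the remainder of the proof is a direct appeal to the reverse direction of Theorem \ref{thm:Gevrey and FT, general X}.
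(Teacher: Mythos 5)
Your proposal follows the same overall strategy as the paper: deduce the forward direction from Theorem \ref{thm:Gevrey and FT, general X}, and for the converse, cover $\R^d$ by a ball $B(0,1)$ at the origin together with finitely many open cones (compactness of $S^{d-1}$), build a conic partition of unity, split the test function accordingly, apply the microglobal estimate on each piece, and recombine via Theorem \ref{thm:Gevrey and FT, general X}. The one place where you stop short is precisely the step the paper actually discharges: the uniform $L^p$ bound on the multiplier pieces. In the paper's notation the decomposition is applied to $\check\vp$, and one needs $\|\opF\{(1-\chi)\chi_k\check\vp\}\|_{L^p(\R^d)}\le C\|\vp\|_{L^p(\R^d)}$. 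The paper closes this by observing that each $(1-\chi)\chi_k$ is a Mikhlin multiplier --- it is bounded, smooth, vanishes near the origin because of the factor $(1-\chi)$, and is homogeneous of degree zero outside $B(0,1)$, so all the symbol estimates hold --- and hence multiplication by $(1-\chi)\chi_k$ on the Fourier-transform side, equivalently convolution with $\widehat{(1-\chi)\chi_k}$, is bounded on $L^p(\R^d)$ for $1<p<\infty$ with a constant independent of $(\alpha,\beta)$. (The piece $\chi\check\vp$ is handled the same way, with $\chi$ itself a Mikhlin multiplier.) This is also why the statement needs $1<p<\infty$: the Mikhlin theorem fails at $p=1,\infty$. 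For the non--Fourier-transform flavor of decay, the needed bound $\|\chi_i\vp\|_{L^p}\le\|\chi_i\|_\infty\|\vp\|_{L^p}$ is trivial, exactly as you note. So the outline is correct and matches the paper; what remains is to replace the phrase ``Fourier-multiplier style bound'' with an explicit invocation of the Mikhlin (H\"ormander--Mikhlin) multiplier theorem applied to the cutoffs $(1-\chi)\chi_k$ and $\chi$.

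Two small remarks on the write-up. First, whether you decompose $\vp$ or $\check\vp$ is immaterial --- the paper decomposes $\check\vp$ and then Fourier transforms, you decompose $\vp$ directly; the two bookkeepings are related by $\vp\leftrightarrow\check\vp$ and both are consistent with the definition of microglobal regularity, which places the support condition on the argument of $\hat T$. Second, it is cleaner to follow the paper and first extend the spherical partition of unity $\chi_1,\dots,\chi_N$ homogeneously of degree zero (not smooth at $0$) and then insert the factor $(1-\chi)$, rather than trying to build smooth conic cutoffs from scratch; that the resulting functions are not in $C_c^\infty$ is harmless because the Mikhlin theorem only requires symbol bounds, not compact support.
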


\begin{proof} The forward direction follows immediately from Theorem
\ref{thm:Gevrey and FT, general X}. Therefore, we may assume that $T$ is 
Fourier $\opX$-microglobally regular  in every direction $\xi\in\R^d$.
Let $\chi_1,\dots,\chi_N$ be a smooth partition of unity on $\mathbb{S}^{d-1}$ so that $\supp\chi_j \subset\Gamma_j$ where
$\Gamma_j$ is a cone given by Definition \ref{defn:Fourier wavefront, X} corresponding to some $\xi_j \in\R^d\setminus\{0\}$.
We extend $\chi_j$ to  $\R^d\setminus\{0\}$ homogeneously of degree $0$. We also let $\chi \in C^\infty_c(\R^d)$
so that $\chi \equiv 1$ on $B(0,1/2)$ and $\supp\chi \subset B(0,1)$. Let $\vp\in\opS(\R^d)$ and write
\begin{equation}\label{eqn:decomposition, general X}
\check\vp = \chi\check \vp + (1-\chi)\sum_{k=1}^N \chi_k\check \vp.
\end{equation}
Let $(\alpha,\beta,q)=\mfs \in\mfS$ so that $(\beta,\alpha,p)\in \widehat\mfS$.
Since $(1-\chi)\chi_k\check\vp \in\opS(\R^d)$ satisfies $\supp (1-\chi)\chi_k\check\vp  \subset \Gamma_k$, we estimate
\[
|\la x^\beta D^\alpha T, \opF\{(1-\chi)\chi_k\check \vp\} \ra|
= |\la D^\beta \{\xi^\alpha \hat T\}, (1-\chi)\chi_k\check \vp \ra|
\leq c_{\mfs} 
\|\opF\{(1-\chi)\chi_k\check\vp\}\|_{L^p(\R^d)} 
\]
for the appropriate $C_{|\alpha|,|\beta|}$, depending on the definition of $\opX$.
Moreover, $(1-\chi)\chi_k\in L^\infty$ is a Mikhlin multiplier, so its Fourier transform is the convolution kernel of an $L^p$-bounded operator, $1 < p < \infty$.
Hence
\[
\|\opF\{(1-\chi)\chi_k\check\vp\}\|_{L^p(\R^d)} 
= \| \widehat{(1-\chi)\chi_k}*\vp\|_{L^p(\R^d)}
\leq C\|\vp\|_{L^p(\R^d)}.
\]
The argument for decay at $0$ follows from the same argument with $\chi$ replacing by $(1-\chi)\chi_k$.
The theorem now follows from the decomposition of $\vp$ given in \eqref{eqn:decomposition, general X}.
\end{proof}

%
%
\section{A special case: the space of global $L^q$ Denjoy-Carleman functions}
\label{sec:Denjoy-Carleman}

If $M=(M_j)_{j\in\N_0}$ is an appropriately defined increasing sequence, then the global $L^q$ Denjoy-Carleman space
\[
\Gqm(\Om) = \bigcup_{A>0} \Gqm_A(\Om) 
\]
where
\begin{align}
\Gqm_A(\Om) &= \big\{f \in C^\infty(\R^d) : \text{there is a constant }C>0  \text{ so that}\label{eq:EqM} \\
&\hskip3.5cm \|D^\alpha f\|_{L^q(\Om)} \leq C A^{|\alpha|}M_{|\alpha|} 
\text{ for all multi-indices }\alpha\big\}.\notag
\end{align}

We will not need to know the requirements on $M_j$ apart from that it is increasing and satisfies \eqref{eq:faster} (see \cite{HoRa19g}). The most important example of the
global $L^q$ Denjoy-Carleman spaces are the global $L^q$-Gevrey spaces 
$\Gqs(\R^d)$ in which case $M_j = j^{js}$ \cite{AdHoRa17,HoRa19}.

\begin{prop}\label{prop:regular at xi, regular at 0} 
Let $1 \leq q \leq \infty$ and $\opX$ be a function space contained by $W^{1,q}(\R^d)$.
There exists $T\in \opS'(\R^d)$ so that $T$ is $\opX$-microglobally regular for all $\xi\in\R^d\setminus\{0\}$ but not
$\opX$-microglobally regular at $\{0\}$.
\end{prop}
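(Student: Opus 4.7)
The plan is to take $T$ to be a nonzero polynomial on $\R^d$, so that $\hat T$ is a nonzero distribution whose support is the single point $\{0\}$. A convenient choice is $T(x) = x_j$ for any $j \in \{1,\dots,d\}$; then $\hat T$ is (a nonzero constant multiple of) $\partial_j \delta_0$. One could equivalently take $T \equiv 1$, but the polynomial $x_j$ will handle all $q \in [1,\infty]$ uniformly.

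For microglobal regularity at an arbitrary $\xi \in \R^d \setminus \{0\}$, I would pick any open cone $\Gamma \ni \xi$ with $0 \notin \Gamma$. Because $\supp \vp \subset \Gamma$ is closed in $\R^d$ and $0 \notin \Gamma$, every such test function $\vp \in \opS(\R^d)$ vanishes identically on a neighborhood of $0$. Since $\hat T$, and hence every $x^\beta D^\alpha \hat T$, remains supported at $\{0\}$, the pairings $\la x^\beta D^\alpha \hat T,\vp\ra$ all vanish. Every decay inequality in Definition \ref{defn:dist with decay} is therefore satisfied trivially, and $T$ is $\opX$-microglobally regular at $\xi$.

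For the failure at $\xi = 0$, I would invoke the inclusion $\opX \subset W^{1,q}(\R^d) \subset L^q(\R^d)$, which forces the parameter $(0,0,q)$ to lie in the defining set $\mfS$ of $\opX$ (or at least an equivalent semi-norm controlling $\|f\|_{L^q}$), so that $(0,0,p) \in \hat\mfS$ with $\tfrac{1}{p} + \tfrac{1}{q} = 1$. The microglobal regularity condition at $0$ then demands
\[
|\la \hat T, \vp\ra| \leq c\, \|\hat\vp\|_{L^p(\R^d)} \qquad \text{for every } \vp \in \opS(\R^d) \text{ with } \supp \vp \subset B(0,1)
\]
for some fixed constant $c$. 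For $T = x_j$, the left-hand side equals $c'|\partial_j \vp(0)|$. I would test against the scaling family $\vp_\epsilon(x) = x_j\, \chi(x/\epsilon)$, where $\chi \in C^\infty_c(B(0,1))$ and $\chi(0) = 1$. Then $\partial_j \vp_\epsilon(0) = \chi(0) = 1$ is independent of $\epsilon$, while a routine Fourier-scaling computation gives $\hat\vp_\epsilon(\xi) = C\, \epsilon^{d+1}(\partial_j \hat\chi)(\epsilon \xi)$ and hence $\|\hat\vp_\epsilon\|_{L^p(\R^d)} = O(\epsilon^{1 + d/q}) \to 0$ as $\epsilon \to 0^+$ for every $q \in [1,\infty]$. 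The required inequality cannot hold with any fixed $c$, so $T$ is not $\opX$-microglobally regular at $0$.

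The main obstacle I anticipate is bookkeeping around which variant of the decay inequality is relevant, since the remark at the end of Definition \ref{defn:dist with decay} and the subsequent discussion allow the norm on the right-hand side to be either $\|\vp\|_{L^p}$ or $\|\hat\vp\|_{L^p}$, depending on how $\opX$ is presented. The same family $\vp_\epsilon$ defeats both variants: in the non-Fourier version one uses $\|\vp_\epsilon\|_{L^p} = O(\epsilon^{1+d/p}) \to 0$ instead. In either case the ratio $|\la \hat T,\vp_\epsilon\ra|/(\text{right-hand norm})$ blows up along $\epsilon \to 0^+$, so the construction works in full generality subject only to the hypothesis $\opX \subset W^{1,q}(\R^d)$.
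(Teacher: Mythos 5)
Your proposal is correct and follows the same route as the paper: the paper also takes $T(x) = x_1$, so $\hat T = i\,\partial_{\xi_1}\delta_0$, uses $\supp \hat T = \{0\}$ to get regularity away from the origin, and observes that pairing against test functions near the origin produces $\partial_1\check\vp(0)$, which is not controlled by any $L^p$ norm. The only difference is that the paper asserts the last failure without detail while you supply the explicit scaling family $\vp_\epsilon(x) = x_j\chi(x/\epsilon)$ to verify it.
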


\begin{rem}
Examples for $\opX$ include $W^{k,q}(\R^d)$, $1 \leq k \leq \infty$, and $\Gqm(\R^d)$.
\end{rem}

\begin{proof} Let $T\in\opS'(\R^d)$ be defined by
\[
\la T, \vp \ra = \int_{\R^d} x_1 \vp(x)\, dV(x)
\]
Then
\[
\hat T = i \frac{\p \delta_0}{\p \xi_1}.
\]
Since $\supp \hat T = \{0\}$, it follows immediately that $T$ is $\opX$-microglobally regular for all $\xi\in\R^d\setminus\{0\}$.
However, $T$ is clearly not $\opX$-microglobally regular at $\{0\}$ because 
\[
\la T,\vp \ra = \la T, \hat{\check{\vp}}\ra
= \la \hat T, \check\vp \ra = i \Big\la \frac{\p \delta_0}{\p \xi_1}, \check\vp \Big\ra
= -i \frac{\p\check\vp(0)}{\p\xi_1}
\]
is certainly  bounded by neither $\|\check \vp\|_{L^p(\R^d)}$ nor $\|\vp\|_{L^p(\R^d)}$ for any $1 \leq p \leq \infty$.
\end{proof}

For the global $L^q$ Denjoy-Carleman spaces, we
now have two notions of the microglobal wavefront set -- the Fourier $\Gqm$-microglobal wavefront set and the FBI 
$\Gqm$-microglobal wavefront set. The former is defined on 
$\opS'(\R^d)$ and the latter on $\Gqm(\R^d)'$. It is a natural question to know whether or not these notions agree for distributions in
$\opS'(\R^d)\cap\Gqm(\R^d)'$. First, though, 
we recall the FBI transform and FBI $\Gqm$-microglobal regularity \cite{HoRa19g}
but need to introduce one piece of notation. For  $\xi\in\R^d$, define
\[
\la\xi\ra = \sqrt{1+\xi\cdot\xi}
\]
and the form $\omega$ by
\[
\omega = dx_1\wedge \cdots \wedge dx_d \wedge d(\xi_1+ix_1\la\xi\ra) \wedge \cdots d(\xi_d+ix_d\la\xi\ra)
:= \alpha(x,\xi) dx_1\wedge \cdots \wedge dx_d \wedge d\xi_1\wedge \cdots \wedge d\xi_d.
\]
The function $\alpha$ is a sum of terms of the form
\[
\alpha_\beta(x,\xi) = c_\beta x^\beta \Big(\frac{\xi}{\la\xi\ra}\Big)^\beta
\]
where $\beta \in \{0,1\}^d$.

Finally,  if $M=(M_j)_{j\in\N}$ is an appropriately defined increasing sequence, then its associated function $M(t)$ is given by
\[
M(t)=\sup_{p\in\N}\log \frac{t^p}{M_p}.
\]

Recall that the $\opG^{q,\frac 12}(\R^d)$ is set of global $L^q$ Gevrey functions of order $\frac12$.
\begin{defn}\label{defn:FBI transform}
Let $\Gqm(\R^d)$ be a global $L^q$ Denjoy-Carleman function class.
We say that $\Gqm(\R^d)$ \emph{completely contains} $\opG^{q,\frac 12}(\R^d)$ 
if $\Gqm_A(\R^d) \supset \opG^{q,\frac 12}(\R^d)$ for every $A>0$.

Given a global $L^q$ Denjoy-Carleman class $\Gqm(\R^d)$ that completely contains $\opG^{q,\frac 12}(\R^d)$
and a distribution $u\in \Gqm(\R^d)'$, define the \emph{FBI transform} of $u$
by
\[
\mathfrak{F} u(x,\xi) = \big\la u, e^{i(x-\cdot)\cdot\xi - \la\xi\ra(x-\cdot)^2} \alpha(x-\cdot,\xi)\big\ra
\]
The function $\mathfrak{F} u$ is well defined for $u\in\Gqm_A(\R^d)'$ since the exponential function $e^{-a|x|^2}\in\opG^{q,\frac 12}(\Om)$
 \cite[Section 4.1]{AdHoRa17}.
\end{defn}
\begin{defn}\label{defn:wfs}
Let $u\in {\Gqm}(\R^d)'$ and $\xi^0 \in \R^d\setminus \{0\}.$ We say that $u$ is 
\emph{FBI $\Gqm$-microglobal regular at $\R^d\times\{\xi^0\}$} (or simply $\xi^0$) if 
there exist a conic neighborhood  $\Gamma_0$ of $\xi^0$ in  $\R^d \setminus \{ 0\}$ and constants $c, C>0$ 
such that for each $q\le r\le \infty$, 
\begin{equation} \label{eq:fbi}
\|D^J_x \mathfrak{F} u(x,\xi)\|_{L^r(\R^d)} \leq C A_0^{|J|}M_{|J|} e^{-\frac{1}{c}M(a |\xi|)}, \quad \forall \xi\in\Gamma_0.
\end{equation}
We define the \emph{FBI $\Gqm$-wave front set of $u$} 
as the complement of the set of the directions $\xi$ in which $u$ is FBI $\Gqm$-microglobal regular, that is
\begin{eqnarray*}
WF_{\Gqm}(u) := \{\xi \in\R^d: u\, \mbox{ is not  $\Gqm$-microglobal regular at } \, \xi \}.
\end{eqnarray*}
\end{defn}

\begin{prop}\label{prop:wave front sets do not agree}
There exists $T \in \opS'(\R^d) \cap \opE^{1,M}(\R^d)'$ so that
\[
\opWF_{\opE^{1,M}}(T) \neq WF_{\opE^{1,M}}(T).
\]
\end{prop}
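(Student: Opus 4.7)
The plan is to take $T \equiv 1$, the constant function on $\R^d$, and exhibit the desired discrepancy by showing that $\opWF_{\opE^{1,M}}(T)$ contains the origin while, by the very form of Definition~\ref{defn:wfs}, $WF_{\opE^{1,M}}(T) \subset \R^d \setminus \{0\}$. The whole argument hinges on exposing this structural asymmetry between the two notions of wavefront set.

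First, $T \in \opS'(\R^d)$ is immediate. Since $\opE^{1,M}(\R^d)$ embeds into $L^1(\R^d)$ with $\|\vp\|_{L^1(\R^d)}$ appearing (up to a factor of $M_0$) among its defining seminorms, the pairing $\la T, \vp\ra = \int_{\R^d} \vp\, dV$ extends continuously to $\opE^{1,M}(\R^d)$, so $T \in \opE^{1,M}(\R^d)'$. Next, $\hat T = c\, \delta_0$ for some constant $c>0$, and I verify Fourier $\opE^{1,M}$-microglobal regularity at every $\xi^0 \neq 0$ as follows: choose a conic neighborhood $\Gamma$ of $\xi^0$ with $0 \notin \overline\Gamma$; then every $\vp \in \opS(\R^d)$ supported in $\Gamma$ vanishes in a neighborhood of the origin, so $\la x^\beta \hat T, \vp\ra = c\, (x^\beta \vp)(0) = 0$ for every multi-index $\beta \in \N_0^d$. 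At the origin, however, I test against a family $\vp_n \in C^\infty_c(B(0,1))$ with $\vp_n(0) = 1$ and $\|\vp_n\|_{L^1(\R^d)} \to 0$; then $|\la \hat T, \vp_n\ra| = c$ is controlled neither by $\|\vp_n\|_{L^1(\R^d)}$ nor, via the Hausdorff--Young inequality, by $\|\hat\vp_n\|_{L^\infty(\R^d)} \leq \|\vp_n\|_{L^1(\R^d)}$. Hence the decay estimates defining Fourier $\opE^{1,M}$-microglobal regularity must fail at $\xi = 0$, and altogether $\opWF_{\opE^{1,M}}(T) = \{0\}$.

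Since Definition~\ref{defn:wfs} restricts the FBI wavefront set to directions $\xi^0 \in \R^d \setminus \{0\}$, we always have $0 \notin WF_{\opE^{1,M}}(T)$, and the two sets therefore cannot coincide. The main conceptual point, and the only delicate step, is the failure of Fourier regularity at $\xi = 0$, which reduces to the elementary fact that point evaluation is not bounded with respect to any $L^p$ norm for $p < \infty$. No calculation of the FBI transform of the constant function is needed: the structural observation that $0$ is excluded from $WF_{\opE^{1,M}}$ by fiat already forces $\opWF_{\opE^{1,M}}(T) \neq WF_{\opE^{1,M}}(T)$.
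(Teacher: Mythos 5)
Your choice of $T\equiv 1$ matches the paper's, and your computation of $\opWF_{\opE^{1,M}}(T)=\{0\}$ is correct and runs along the same lines as the paper's (decay at nonzero directions follows from $\supp\hat T=\{0\}$; failure of decay at the origin follows from the unboundedness of the functional $\vp\mapsto\vp(0)$ against $\|\hat\vp\|_{L^\infty}$). However, the second half of your argument contains a genuine gap. You claim that $0\notin WF_{\opE^{1,M}}(T)$ ``by the very form of Definition~\ref{defn:wfs},'' but that definition reads $WF_{\Gqm}(u):=\{\xi\in\R^d: u\text{ is not }\Gqm\text{-microglobal regular at }\xi\}$ --- the set is indexed over all of $\R^d$, not $\R^d\setminus\{0\}$. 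What Definition~\ref{defn:wfs} leaves undefined is \emph{regularity} at $\xi^0=0$; it does not say the wavefront set cannot contain $0$. Under the equally natural reading that ``$u$ is not regular at $0$'' is (vacuously) true when regularity at $0$ is undefined, one would get $0\in WF_{\opE^{1,M}}(T)$, and your conclusion would fail. So your argument rests on resolving a definitional ambiguity in your favor, which is not a proof.

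The paper avoids this ambiguity entirely. It computes the FBI transform $\mathfrak{F}T(x,\xi)$ of the constant function explicitly, using the Gaussian integrals $\int_\R e^{iya-by^2}\,dy$ and $\int_\R e^{iya-by^2}y\,dy$, and observes that $\mathfrak{F}T$ is \emph{independent of $x$}. Consequently $D^J_x\mathfrak{F}T(x,\xi)$ can never lie in $L^r(\R^d_x)$ for $r<\infty$, so \eqref{eq:fbi} fails for every $\xi\neq 0$ and $WF_{\opE^{1,M}}(T)=\R^d\setminus\{0\}$. This establishes the disagreement on all of $\R^d\setminus\{0\}$, where both notions of regularity are unambiguously defined, which is the real content of the proposition. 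You need this computation (or an equivalent one) rather than the structural observation about $\xi=0$.
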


\begin{proof} Let $T$ be the distribution that is given by integration against $1$. 
To compute $\opWF_{\opE^{1,M}}(T)$, we observe that if
$\hat\vp\in \opS(\R^d)$ is supported in an open cone then the fact that 
$\hat T = \delta_0$ is supported at $\{0\}$ forces
\[
|\la D^\alpha T,\vp \ra| = |\la \delta_0, \xi^\alpha\check\vp \ra| =0.
\]
Hence, $\xi \notin\opWF_{\opE^{1,M}}(T)$ for all $\xi\neq 0$. On the other hand, for $\vp\in\opS(\R^d)$ with
$\supp\hat\vp\subset B(0,1)$, we observe that
\[
\la T,\vp\ra = \int_{\R^d} \vp(x)\,dV(x) = \hat\vp(0)
\]
and
\[
\hat\vp \mapsto \hat\vp(0)
\]
does not extend to a bounded linear operator on $L^1(\R^d)$. Hence $0 \in \opWF_{\opE^{1,M}}(T)$.

We now compute $WF_{\opE^{1,M}}(T)$. Note, though, that
\[
f \mapsto \int_{\R^d}\, f\, dx
\]
is a bounded linear operator $\opE^{1,M}(\R^d)$ (but not on $\Gqm(\R^d)$ if $q>1$).
We observe that
\begin{align*}
\mathfrak{F}T(x,\xi)
&= \int_{\R^d} e^{i(x-y)\cdot\xi - \la\xi\ra(x-y)^2} \alpha(x-y,\xi)\, dV(y) \\
&= (-1)^d \int_{\R^d} e^{iy\cdot\xi - \la\xi\ra y^2} \alpha(y,\xi)\, dV(y) \\
&= (-1)^d \sum_{\beta\in\{0,1\}^d} \bigg\{\Big(\frac{\xi}{\la\xi\ra}\Big)^\beta c_\beta
\prod_{k=1}^d \int_{\R} e^{iy_k\xi_k-\la\xi\ra y_k^2} y_k^{\beta_k}\, dy_k\bigg\}.
\end{align*}
Since $\beta_k=0$ or $1$, we easily compute that if $b>0$, then
\[
\int_{\R} e^{iya-by^2}\, dy = \frac{\sqrt\pi}{\sqrt b} e^{-\frac{a^2}{4b}}
\qquad\text{and}\qquad
\int_{\R} e^{iya-by^2} y\, dy = -\frac{i a \sqrt\pi}{2b^{3/2}} e^{-\frac{a^2}{4b}}.
\]
The result of this calculation is that the FBI transform of $T$ is \emph{independent} of $x$, hence there is no
way the resulting function is in $L^q(\R^d)$ for $1 \leq q < \infty$. Hence $WF_{\opE^{1,M}}(T) = \R^d\setminus\{0\}$.
\end{proof}

%
%
\section{Applications to the Restriction Problem}\label{sec:restriction}
We follow \cite[Chapter VIII.4]{Ste93}.
Let $S\subset\R^d$ be a subset supporting a measure $d\sigma$ on $S$ (possibly the induced Lebesgue
measure). We say that $S$ has the \emph{$L^p$-$L^q$ restriction property} if there exists $A_{p,q}>0$
so that
\[
\bigg(\int_S|\hat f(\xi)|^q\, d\sigma(\xi)\bigg)^{1/q} \leq A_{p,q}\|f\|_{L^p(\R^d)}, \quad\text{for all } f\in\opS(\R^d).
\]

For $\xi \in S$, the restriction operator
\[
Rf(\xi) = \int_{\R^d} e^{-ix\cdot\xi}f(x)\, dx.
\]
If $R^*$ is the adjoint operator and
\[
Uf(x) := R^* Rf(x) = \int_{\R^d}\int_S 	e^{i\xi(x-y)}\, d\sigma(\xi)\, f(y) \, dy = f*\check{d\sigma}(x)
\]
then Stein points out that the following are equivalent \cite{Ste93}:
\begin{enumerate}[(i)]
\item $R:L^p(\R^d)\to L^2(S,d\mu)$ is bounded (that is, the $L^p$-$L^2$ restriction property holds);
\item $U:L^p(\R^d)\to L^{p'}(\R^d)$ is bounded where $\frac 1p + \frac 1{p'}=1$;
\end{enumerate}

\begin{defn}\label{defn:moments restriction problem}
Fix $1 \leq p < \infty$.
Let $\opX \subset \opS'(\R^d)$ be a function space defined by a parameter pair $(\mfP,\mfG)$.
If $T$ is a tempered distribution so that $f * \hat T \in \opX$ for all 
$f\in L^p(\R^d)$, then we say
that \emph{$L^p$-$L^2$ restriction problem with $\opX$-moments} holds for $T$.
\end{defn}
We use the terminology moment in Definition \ref{defn:moments restriction problem} because if $\opX$ is a function space
defined by smoothness estimates such as Sobolev spaces or global $L^q$-Denjoy Carleman space, then 
estimating $D^\alpha \widehat{d\sigma}$ gives us information about $x^\alpha\, d\sigma$, the moments of $d\sigma$.

\begin{rem} If $L^{p'}(\R^d) \subset\opX$, then $T$ satisfies the classical $L^p$-$L^2$ restriction problem.
\end{rem}

\subsection{The Salem Example} In \cite{HoRa19}, we showed that for each $q>2$, the measure $d\mu$ constructed
by Salem \cite{Sal51} satisfies $\|D^k \widehat{d\mu}\|_{L^q(\R)}\leq C$ for some constant $C$. We proved this by showing
$\|D^k \widehat{d\mu}\|_{L^\infty(\R)}\leq 1$ and $|D^k\widehat{d\mu}(\xi)| \leq C|\xi|^{-\beta/2}$ for some $\beta$ that does not depend on $k$. 
It is also well known that the measure is supported
on a set of Hausdorff dimension $\alpha$ (which can be made arbitrarily close to, but greater than, $2/q$).
The latter two facts means that we
can employ Mockenhaupt's adaption of the Tomas-Stein argument to prove the following theorem.
\begin{thm}\label{thm:Salem and restriction}
The $L^p$-$L^2$ restriction problem with $\opG^{p',0}_1(\R)$-moments holds for the Salem measure when
$1 \leq p < \frac{2(2-2\alpha+\beta)}{4(1-\alpha)+\beta}$.
\end{thm}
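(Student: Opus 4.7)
By Definition \ref{defn:moments restriction problem} applied to $T = d\mu$, I need to show that for every $f \in L^p(\R)$, the convolution $f * \widehat{d\mu}$ lies in $\opG^{p',0}_1(\R)$. That is, I must find a constant $C$ (depending only on $\|f\|_{L^p}$, the Frostman constant of $\mu$, and the decay rate $\beta$) such that
\[
\|D^k(f * \widehat{d\mu})\|_{L^{p'}(\R)} \leq C \qquad \text{for every } k \geq 0.
\]
Since $D^k(f * \widehat{d\mu}) = f * D^k \widehat{d\mu}$ and $D^k \widehat{d\mu} = \widehat{(-ix)^k\, d\mu}$, the claim reduces to showing that convolution with $D^k \widehat{d\mu}$ is bounded $L^p(\R) \to L^{p'}(\R)$ with norm uniform in $k$.

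The heart of the proof is Mockenhaupt's dyadic adaptation of the Tomas-Stein argument, applied to each kernel $K^{(k)}:=D^k\widehat{d\mu}$. I choose a Littlewood-Paley decomposition $K^{(k)} = \sum_{\nu \geq 0} K^{(k)}_\nu$ with $K^{(k)}_\nu$ supported in $|x|\sim 2^\nu$ and then prove the two endpoint bounds: (i) an $L^1\!\to\!L^\infty$ bound $\|K^{(k)}_\nu\|_{L^\infty}\lesssim 2^{-\nu\beta/2}$, which follows directly from the pointwise decay $|D^k\widehat{d\mu}(\xi)|\leq C|\xi|^{-\beta/2}$ recalled from \cite{HoRa19}; (ii) an $L^2\!\to\!L^2$ bound $\|\widehat{K^{(k)}_\nu}\|_{L^\infty}\lesssim 2^{\nu(1-\alpha)}$, which comes from viewing $\widehat{K^{(k)}_\nu}$ as the measure $(-ix)^k\, d\mu$ smoothed at scale $2^{-\nu}$ and invoking the Frostman condition of dimension $\alpha$. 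Riesz--Thorin interpolation then yields, for each $\nu$, the $L^p\to L^{p'}$ bound
\[
\bigl\|f * K^{(k)}_\nu\bigr\|_{L^{p'}} \lesssim 2^{\nu[-\theta\beta/2 + (1-\theta)(1-\alpha)]} \|f\|_{L^p}, \qquad \theta = \tfrac{2}{p}-1.
\]
Summing in $\nu$ converges precisely when $-\theta\beta/2 + (1-\theta)(1-\alpha) < 0$; solving for $p$ gives the range $1 \leq p < \frac{2(2-2\alpha+\beta)}{4(1-\alpha)+\beta}$ claimed in the theorem.

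The main technical point, and where I expect to spend real effort, is verifying uniformity in $k$ of the constants in steps (i) and (ii). For (i) this is precisely the content of \cite{HoRa19}, which established that the decay constant in $|D^k\widehat{d\mu}(\xi)|\leq C|\xi|^{-\beta/2}$ is independent of $k$; for (ii) one observes that $d\mu$ is compactly supported (in $[0,1]$, say, following Salem's construction), so the signed measure $(-ix)^k d\mu$ satisfies $|d\nu_k|\leq d\mu$ as total variation measures. Consequently $\nu_k$ inherits the Frostman condition $|\nu_k|(B(x,r))\leq \mu(B(x,r))\leq Cr^\alpha$ with a constant independent of $k$, which is exactly what feeds into the $L^2\to L^2$ bound. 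Combining both uniform endpoint estimates with the interpolation step above gives a bound on $\|f * D^k \widehat{d\mu}\|_{L^{p'}}$ that is independent of $k$, which is precisely membership in $\opG^{p',0}_1(\R)$ and completes the proof.
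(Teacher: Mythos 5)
Your proposal is correct and follows essentially the same approach as the paper. The paper's proof of Theorem \ref{thm:Salem and restriction} is just a pointer: ``follows by the argument of the proof of Theorem \ref{thm:surface area measure is global} or by combining \eqref{eqn:eliminating alpha} with the proof of [Moc00, Thm.\ 4.1].'' Your write-up fleshes out exactly that dyadic Tomas--Stein/Mockenhaupt argument with the two Salem-specific inputs: the uniform-in-$k$ pointwise decay $|D^k\widehat{d\mu}(\xi)|\leq C|\xi|^{-\beta/2}$ recalled from \cite{HoRa19} for the $L^1\to L^\infty$ endpoint, and the Frostman bound inherited by $(-ix)^k\,d\mu$ (since $|x|\le 1$ on $\supp\mu$ forces $|d\nu_k|\le d\mu$) for the $L^2\to L^2$ endpoint. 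The interpolation exponent $\theta=\tfrac2p-1$ and the final solve for $p<\frac{2(2-2\alpha+\beta)}{4(1-\alpha)+\beta}$ both check out.
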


\begin{proof}The proof follows by the argument of the proof of Theorem \ref{thm:surface area measure is global} or by combining
\eqref{eqn:eliminating alpha} with the proof of \cite[Theorem 4.1]{Moc00}.
\end{proof}

\subsection{Surface area measure on the sphere}
For each $m\in\N$, denote the $m$th Bessel $J$ function by
\[
J_m(r)=\frac{1}{2\pi}\int_0^{2\pi} e^{ir\sin\theta} e^{-im\theta}\, d\theta.
\]
\begin{thm}\label{thm:surface area measure is global}

Let $d\sigma$ be the surface area measure on sphere $S^{d-1}$. Then
\begin{enumerate}[1.]
\item
\[
\widehat{d\sigma}(\xi) = 2 \pi^{\frac d2} \frac{J_{\frac{d-2}2}(|\xi|)}{|\xi|^{\frac{d-2}2}} \in \opG^{q,1}(\R^d)
\]
if and only if $q > \frac{2d}{d-1}$.
\item The sphere satisfies the $L^p$-$L^2$ restriction problem with $\opG^{p',1}(\R^d)$-moments when
$1 \leq p < \frac{2(d+1)}{d+3}$.
\end{enumerate}
\end{thm}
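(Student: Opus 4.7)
The plan is to split the theorem into its two parts, since Part 2 rests on the uniform estimates developed for Part 1.

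For Part 1, I would start from the classical identity $\widehat{d\sigma}(\xi) = 2\pi^{d/2}J_{(d-2)/2}(|\xi|)/|\xi|^{(d-2)/2}$ together with the Bessel asymptotic $J_m(r) = \sqrt{2/(\pi r)}\cos(r-m\pi/2-\pi/4) + O(r^{-3/2})$, which yields $|\widehat{d\sigma}(\xi)| \lesssim (1+|\xi|)^{-(d-1)/2}$. Necessity of $q > \frac{2d}{d-1}$ is then immediate: the tail integral $\int_{|\xi|\geq 1}|\widehat{d\sigma}(\xi)|^q\,d\xi$ converges only if $q(d-1)/2 > d$, so for smaller $q$ the zeroth derivative already fails to be in $L^q(\R^d)$. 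For sufficiency, I would bound $D^\alpha\widehat{d\sigma}(\xi) = \int_{S^{d-1}}(-ix)^\alpha e^{-ix\cdot\xi}\,d\sigma(x)$ in two complementary ways: pointwise by $|D^\alpha\widehat{d\sigma}(\xi)| \leq \sigma(S^{d-1})$ uniformly in $\alpha$ (since $|x^\alpha|\leq 1$ on $S^{d-1}$), and via stationary phase, which shows that the leading $|\xi|^{-(d-1)/2}$ contribution is controlled by $|g_\alpha(\pm\xi/|\xi|)|\leq 1$ with constants independent of $\alpha$, while the subleading terms involve tangential derivatives of the amplitude $g_\alpha(x) = (-ix)^\alpha$ on the sphere and carry at most polynomial-in-$|\alpha|$ factors compensated by higher negative powers of $|\xi|$. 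Splitting $\R^d$ at $|\xi|$ equal to a suitable polynomial in $|\alpha|$ and combining the two bounds gives $\|D^\alpha\widehat{d\sigma}\|_{L^q(\R^d)}$ with merely polynomial growth in $|\alpha|$, comfortably inside the Gevrey $s=1$ admissible growth $CA^{|\alpha|}|\alpha|^{|\alpha|}$.

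For Part 2, the observation $D^\alpha(f * \widehat{d\sigma}) = f * \widehat{g_\alpha d\sigma}$ reduces the question to a uniform-in-$\alpha$ $L^p$-$L^{p'}$ bound for convolution with $\widehat{\mu_\alpha}$, where $\mu_\alpha := g_\alpha d\sigma$. The complex measure $\mu_\alpha$ still satisfies a Frostman bound $|\mu_\alpha|(B(x,r)) \leq C r^{d-1}$ with constant independent of $\alpha$ (since $|g_\alpha|\leq 1$), and the estimates of Part 1 upgrade to $|\widehat{\mu_\alpha}(\xi)| \leq P(|\alpha|)(1+|\xi|)^{-(d-1)/2}$ with $P$ polynomial. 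Feeding these two ingredients into Mockenhaupt's version of the Tomas-Stein argument (as in \cite{Moc00}), with Frostman exponent and Fourier decay exponent both equal to $d-1$, produces the sharp threshold $p_c = \frac{2(d+1)}{d+3}$ and an estimate $\|f * \widehat{\mu_\alpha}\|_{L^{p'}} \leq \widetilde P(|\alpha|)\|f\|_{L^p}$ valid for $1 \leq p < p_c$, with $\widetilde P$ again polynomial. Since any polynomial in $|\alpha|$ is trivially dominated by $CA^{|\alpha|}|\alpha|^{|\alpha|}$, we conclude $f * \widehat{d\sigma} \in \opG^{p',1}(\R^d)$.

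The main obstacle is carrying the Mockenhaupt constant with merely polynomial dependence on $|\alpha|$. This amounts to re-running the dyadic decomposition of the kernel $\widehat{\mu_\alpha}$ together with the analytic-family interpolation, while bookkeeping the $\alpha$-dependence, and verifying that it enters only through the two measure-theoretic constants (Frostman exponent and Fourier decay constant) rather than through pointwise smoothness of the amplitude $g_\alpha$. Because Gevrey order $1$ tolerates the very generous loss $A^{|\alpha|}|\alpha|^{|\alpha|}$, any polynomial loss in $|\alpha|$ is absorbed and the argument goes through.
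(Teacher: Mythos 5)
Your plan is correct and, for Part 1, takes a genuinely different route from the paper. The paper controls $D^\alpha\widehat{d\sigma}$ by first differentiating the \emph{radial profile} $f_m(r) = J_m(r)/r^m$ via the combinatorial Lemma \ref{lem:derivs of f} (the recursion $f_m' = -rf_{m+1}$, $f_m'' = r^2 f_{m+2} - f_{m+1}$, tracking coefficients $a_{jk},b_{jk}$ that grow like $CA^k k!$), then passing from radial to full Cartesian derivatives by quoting the chain-rule arguments from \cite{AdHoRa17}. You instead stay with the Fourier integral representation $D^\alpha\widehat{d\sigma}(\xi) = \int_{S^{d-1}}(-ix)^\alpha e^{-ix\cdot\xi}\,d\sigma(x)$, pairing the $\alpha$-uniform trivial bound $|D^\alpha\widehat{d\sigma}|\leq\sigma(S^{d-1})$ with a stationary phase estimate whose error constants depend on finitely many tangential derivatives of $x^\alpha$ and hence grow only polynomially in $|\alpha|$. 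Your route avoids the radial-to-Cartesian bookkeeping entirely, and if carried out carefully it actually proves a stronger conclusion than the paper claims: $\|D^\alpha\widehat{d\sigma}\|_{L^q}$ polynomial in $|\alpha|$, rather than the Gevrey-$1$ growth $CA^{|\alpha|}|\alpha|^{|\alpha|}$ that the statement of the theorem requires. The paper's Bessel recursion has the virtue of being self-contained and elementary (no stationary phase machinery), but it wastes some of the available decay. Your necessity argument is the same as the paper's lower-bound computation using the oscillatory asymptotic. For Part 2, your argument is essentially identical to the paper's: the $\alpha$-uniform Frostman bound $|\mu_\alpha|(B(x,r))\lesssim r^{d-1}$ combined with the $(d-1)/2$ Fourier decay, fed through the Mockenhaupt dyadic Tomas--Stein decomposition, gives the interpolated operator bound whose geometric sum converges exactly when $p < 2(d+1)/(d+3)$. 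One detail worth making explicit if you were to write this out: in the dyadic step, the $L^1\to L^\infty$ bound on the piece $T_k$ needs the \emph{pointwise} decay $|D^\alpha\widehat{d\sigma}(\xi)|\lesssim P(|\alpha|)(1+|\xi|)^{-(d-1)/2}$ valid on the annulus $|\xi|\sim 2^k$ for \emph{every} $k$; this means the trivial $L^\infty$ bound has to be patched together with the stationary phase bound with an $|\alpha|$-dependent cutoff, which costs a further polynomial in $|\alpha|$ but no worse -- exactly as you indicate in your last paragraph.
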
 

\begin{rem} The Bessel $J$ functions decay for large $x$ \emph{and} oscillate. If we ignored the oscillation and applied
Young's inequality to the to the estimates in
Part 1 of Theorem \ref{thm:surface area measure is global}, then we could only say that the
$L^p$-$L^2$ restriction problem holds with $\opG^{q,1}(\R^d)$-moments for 
for $1 \leq p <  \frac{4d}{3d+1}$. 
\end{rem}

The next lemma is the main combinatorial lemma that we use to control the derivatives of $\widehat{d\sigma}$.
\begin{lem}\label{lem:derivs of f}
Let $f_m(r) =  \frac{J_m(r)}{r^m}$. Then 
\begin{align}\label{eqn:form of f'}
f_m^{(2k)}(r) &= \sum_{j=0}^k (-1)^{j+k} a_{jk}r^{2j} f_{m+k+j}(r) \\
f_m^{(2k+1)}(r) &= \sum_{j=0}^k (-1)^{j+k+1} b_{jk} r^{2j+1}f_{m+k+j+1}(r) \nn
\end{align}
for some positive constants $a_{jk},b_{jk}$ that grow no faster than $C A^k k!$. Moreover,
\[
a_{kk} = b_{kk} = 1
\]
for all $k$ and the constants satisfy the recursive relationships that for $k\geq 1$,
\[
b_{jk} = \begin{cases} 2(j+1)a_{j+1,k}+a_{jk} & \text{if } 0 \leq j \leq k-1 \\ 1 &\text{if }j=k \end{cases}
\]
and
\[
a_{jk} = \begin{cases} b_{0,k-1} & \text{if } j=0 \\ b_{j,k-1}(2j+1) + b_{j-1,k-1} & \text{if } 1 \leq j \leq k-1 \\ 1 &\text{if }j=k \end{cases}.
\]
\end{lem}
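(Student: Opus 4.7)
The plan is to prove both formulas simultaneously by induction on $k$. The single analytic input I need is the classical Bessel identity
\[
f_m'(r) = -r\, f_{m+1}(r),
\]
which is just $\frac{d}{dr}[r^{-m} J_m(r)] = -r^{-m} J_{m+1}(r)$ rewritten for $f_m$ and follows directly from the power series of $J_m$. The base case $k = 0$ is immediate: $f_m^{(0)} = f_m$ forces $a_{00} = 1$, and the Bessel identity gives the odd formula with $b_{00} = 1$.

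For the inductive step, I would assume the even formula at level $k$ and differentiate term by term, using
\[
\frac{d}{dr}\bigl[r^{2j} f_{m+k+j}(r)\bigr] = 2j\, r^{2j-1} f_{m+k+j}(r) - r^{2j+1} f_{m+k+j+1}(r).
\]
The $j = 0$ contribution to the first piece vanishes; reindexing $j \mapsto j+1$ there and collecting by the monomials $r^{2j+1} f_{m+k+j+1}(r)$ produces the odd formula together with
\[
b_{jk} = 2(j+1) a_{j+1,k} + a_{jk} \quad (0 \leq j \leq k-1), \qquad b_{kk} = a_{kk} = 1.
\]
Differentiating the odd formula once more in exactly the same manner yields the even formula at level $k+1$ together with
\[
a_{j,k+1} = (2j+1) b_{jk} + b_{j-1,k} \quad (1 \leq j \leq k), \qquad a_{0,k+1} = b_{0,k}, \qquad a_{k+1,k+1} = 1,
\]
matching the statement.

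The main obstacle is the growth bound $a_{jk}, b_{jk} \leq CA^k k!$. I would encode the coefficients in the generating polynomials $P_k(x) = \sum_j a_{jk} x^j$ and $Q_k(x) = \sum_j b_{jk} x^j$; the recursions then translate into the clean identities $Q_k = P_k + 2 P_k'$ and $P_{k+1} = (1+x) Q_k + 2x Q_k'$. Changing variable via $u = \sqrt{x}$ and setting $\tilde P_k(u) := P_k(u^2)$, a direct computation collapses the composition into
\[
\tilde P_{k+1}(u) = \tilde P_k''(u) + 2u\, \tilde P_k'(u) + (u^2+1)\,\tilde P_k(u) = e^{-u^2/2}\, \frac{d^2}{du^2}\bigl[e^{u^2/2}\, \tilde P_k(u)\bigr].
\]
Iterating from $\tilde P_0 \equiv 1$ then gives $e^{u^2/2}\, \tilde P_k(u) = (d/du)^{2k}\, e^{u^2/2}$, so $\tilde P_k(u) = H_{2k}(u)$, where $H_n$ is the polynomial defined by $H_n(u)\, e^{u^2/2} = (d/du)^n\, e^{u^2/2}$, which satisfies the recurrence $H_{n+1} = u H_n + H_n'$ and admits the classical expansion $H_n(u) = \sum_m \binom{n}{2m}(2m-1)!!\, u^{n-2m}$. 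Reading coefficients off (and using $\tilde Q_k(u) = H_{2k+1}(u)/u$, which follows from the Hermite recurrence applied to $\tilde Q_k = \tilde P_k + u^{-1}\tilde P_k'$) yields the explicit identities
\[
a_{jk} = \binom{2k}{2j}(2k-2j-1)!!, \qquad b_{jk} = \binom{2k+1}{2j+1}(2k-2j-1)!!,
\]
and the elementary bounds $\binom{n}{k} \leq 2^n$ and $(2m-1)!! \leq 2^m m!$ immediately deliver $a_{jk}, b_{jk} \leq 2 \cdot 8^k k!$, which is the desired growth estimate.
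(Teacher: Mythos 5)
Your proposal is correct, and the inductive core --- differentiating via $f_m'(r) = -r f_{m+1}(r)$, collecting terms by $r^{2j} f_{m+k+j}$, and reading off the two recursions --- is exactly what the paper does. Where you genuinely diverge is the growth estimate. The paper dismisses the bound $a_{jk}, b_{jk} \le CA^k k!$ as "immediate" from the recursion, but the naive estimate $\max_j b_{jk} \le (2k+1)\max_j a_{jk}$ and $\max_j a_{j,k+1} \le (2k+2)\max_j b_{jk}$ only produces $(2k)! \sim 4^k (k!)^2$, which is the wrong factorial power and would spoil the downstream conclusion that $\widehat{d\sigma} \in \mathcal{G}^{q,1}(\R^d)$ (analytic class) rather than $\mathcal{G}^{q,2}$. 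Your generating-polynomial argument closes this gap properly: the substitution $u=\sqrt{x}$ turns the double recursion into $\tilde P_{k+1} = e^{-u^2/2}\,\partial_u^2\,(e^{u^2/2}\tilde P_k)$, which iterates cleanly from $\tilde P_0 \equiv 1$ to give $\tilde P_k = H_{2k}$ with $H_n e^{u^2/2} = \partial_u^n e^{u^2/2}$, whence the closed forms $a_{jk} = \binom{2k}{2j}(2k-2j-1)!!$ and $b_{jk} = \binom{2k+1}{2j+1}(2k-2j-1)!!$ and the elementary bounds $\binom{n}{k} \le 2^n$, $(2m-1)!! \le 2^m m!$ give $a_{jk},b_{jk} \le 2\cdot 8^k k!$. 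So your proof is not merely complete where the paper's is terse; it repairs a step whose assertion of immediacy does not survive scrutiny. You should state explicitly that the base case of the generating-polynomial iteration is $\tilde P_0 \equiv 1$ (from $a_{00}=1$), but otherwise nothing is missing.
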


\begin{proof}
To prove the lemma, we recall the well known properties
\[
f_m'(r) = -r f_{m+1}(r) \qquad\text{and}\qquad
f_m''(r) = r^2f_{m+2}(r) - f_{m+1}(r), \quad \text{for all }  m\in\N_0.
\]
Moreover, assuming the formula holds for $f^{(2k)}(r)$ and suppressing the variable of $f$, 
\begin{align*}
f_m^{(2k+1)} &= (-1)^{k+1} a_{0k} r f_{m+k+1} + \sum_{j=1}^k\big[(-1)^{j+k}a_{jk} (2j)r^{2j-1} f_{m+k+j} + (-1)^{j+k+1}a_{jk} r^{2j+1}f_{m+k+j+1}\big] \\
&=   \sum_{j=0}^{k-1}(-1)^{j+k+1} \big[2(j+1)a_{j+1,k}+a_{jk}\big] r^{2j+1}f_{m+k+j+1} -a_{kk} r^{2k+1}f_{m+2k+1}
\end{align*}
which yields
\[
b_{jk} = 2(j+1)a_{j+1,k}+a_{jk},\ 0 \leq j \leq k-1,
\quad\text{and}\quad
b_{kk} = a_{kk}
\]
Similarly, assuming the formula holds for $f_m^{(2k-1)}$, we compute
\begin{align*}
f_m^{(2k)}(r) &= \sum_{j=0}^{k-1} (-1)^{j+k} b_{j,k-1} r^{2j+1}f_{m+k+j}(r) \\
&=  \sum_{j=0}^{k-1} \big[ (-1)^{j+k} b_{j,k-1}(2j+1) r^{2j}f_{m+k+j} + (-1)^{j+k+1} b_{j,k-1} r^{2(j+1)}f_{m+k+j+1}\big] \\
&= (-1)^{k} b_{0,k-1}f_{m+k}  + \sum_{j=1}^{k-1}  (-1)^{j+k}\big[ b_{j,k-1}(2j+1) + b_{j-1,k-1}\big] r^{2j}f_{m+k+j} + b_{k-1,k-1} r^{2k}f_{m+2k}
\end{align*}
and we see that
\[
a_{0,k} =  b_{0,k-1} \quad\text{and}\quad a_{jk} = b_{j,k-1}(2j+1) + b_{j-1,k-1},\ 1 \leq j \leq k-1 \quad\text{and}\quad a_{kk} = b_{k-1,k-1} = 1.
\]
It is immediate that the coefficients grow at worst like $C A^k k!$ for some constants $A,C>0$. 
\end{proof}

\begin{proof}[Proof of Theorem \ref{thm:surface area measure is global}]
The calculation of $\widehat{d\sigma}(\xi)$ is well known and can be found in \cite[Chapter VIII.3]{Ste93}.
For large $r$, we know that
\[
J_m(r) = \Big(\frac 2\pi\Big)^{1/2} r^{-1/2} \cos(r-\pi m/2 - \pi/4) + O(r^{-3/2}).
\]
We start by computing the range of $q$ for which $f_{\frac{d-2}2}(|\xi|) \in L^q(\R^d)$. Since 
$|\cos r| \geq \frac 12$ if $|r-k\pi| \leq \frac \pi 3$,  for $N$ sufficiently large and $q \geq 1$, it follows that
\begin{align*}
\int_{\R^d} |f_{\frac{d-2}2}(|\xi|)|^q\, dV(\xi)
&\geq \frac1{100^q}\sum_{k=N}^\infty \int_{|r-k\pi| \leq \frac\pi3} \frac{1}{r^{q(\frac 12+\frac{d-2}2)-(d-1)}} \, dr \\
&\geq \frac{1}{100^q} \sum_{k=N}^\infty \frac{1}{[(k-\frac 13)\pi]^{(d-1)(\frac q2-1)}}
\end{align*}
This sum converges exactly when $(d-1)(\frac q2-1)>1$, that is, the sum converges if and only if
$q > \frac{2d}{d-1}$.
Similarly, for $R$ large,
\[
\int_{\R^d} |f_{\frac{d-2}2}(|\xi|)|^q\, dV(\xi)
\leq C \int_{|\xi|\geq R} \frac{1}{|\xi|^{q(\frac 12 + \frac{d-2}2})}\, dV(\xi) 
= C \int_R^\infty \frac{1}{r^{(d-1)(\frac q2 - 1)}}\, dr.
\]
The range of $q$ for which the latter integral converges is again $q > \frac{2d}{d-1}$.

The terms in \eqref{eqn:form of f'} with the worst decay in $r$ corresponds to $r^{2k}f_{m+2k}$ 
and $r^{2k+1}f_{m+2k+1}$, both of which satisfy $r^\ell f_{m+\ell} \sim O(r^{-m-1/2})$. This means $f_m^{(j)}(|\xi|)\in L^q(\R^d)$ for the same range of $q$ as $f_m$. Moreover,
since $f_m$ is analytic and even, it follows that for some constants $C$ and $A$, if $q > 2 + \frac 2{d-1}$, then
\[
\|f^{(k)}_{\frac{d-2}2}\|_{L^q(\R^d)} \leq C_q A^k k^k. 
\]
The mapping $\xi \mapsto f(|\xi|)$ is even in $\xi$, real analytic in $|\xi|$ (and hence well behaved on $B(0,1)$), and $\xi \mapsto |\xi|$ is easily shown to be a function in
$\opG^{\infty,1}(B(0,1)^c)$. Thus, Part 1 of Theorem \ref{thm:surface area measure is global} 
now follows from the arguments in \cite[Section 5]{AdHoRa17}.

Let $\alpha$ be a multi-index and $d\mu_\alpha = x^\alpha\, d\sigma$. $d\mu_\alpha$ is not a positive measure unless $\alpha$ has only multi-indices, but that
will not prevent us from using the Tomas-Stein restriction theorem arguments \cite{Tom75,Moc00}. We want to show that there exists constants
$C_p,A_p>0$ so that if $T^\alpha$ is the operator defined by convolution against $D^\alpha \widehat{d\sigma}$, then
\[
\|T^\alpha f\|_{L^{p'}(\R^d)} \leq C_p A_p^{|\alpha|}|\alpha|^{|\alpha|}
\] 
Let $\{\vp_k:k=0,1,2,\dots\}\subset C^\infty_c(\R^d)$
be a partition of unity so that $\supp \vp_0\subset B(0,2)$, and for $k\geq 1$, $\supp\vp_k\subset \{x : 2^{k-1} \leq |x| \leq 2^{k+1}\}$ and satisfies
$\vp_k(x) = \vp(x/2^k)$ for some $\vp \in C^\infty_c(\R^d)$. 
Set $T_k f = (\vp_k D^\alpha\widehat{d\sigma})*f$ so that $\sum_{k=0}^\infty T_k = T$.

It follows immediately proof of Theorem \ref{thm:surface area measure is global} that
there exist constants $C,A>0$ so that
\[
\|T_k\|_{L^1\to L^\infty} \leq \|\vp_k D^\alpha \widehat{d\sigma}\|_{L^\infty(\R^d)} \leq C A^{|\alpha|} 2^{-k\frac{d-1}2}.
\]
Next, we bound $\|T_k\|_{L^2\to L^2}$ by Plancherel's Theorem.
Using the fact that $\hat\vp\in\opS(\R^d)$ and decays rapidly, for some constant $C$ (that can increase line by line but independent of $k$)
\begin{align*}
\|T_k\|_{L^2\to L^2}
&= \sup_{\atopp{f,g\in \opS(\R^d)}{\|f\|_{L^2} = \|g\|_{L^2}=1}} \big|\big( (\hat \vp_k* d\mu_\alpha)\hat f, \hat g\big) \big| \\
&\leq \sup_{x\in\R^n} |\hat \vp_k* d\mu_\alpha(x)| \\
&\leq C 2^{kd} \sup_{x\in\R^d}  \int_{\R^d} \frac{1}{(1+2^k|x-y|)^d}\, |d\mu_\alpha|(y) \\
&\leq C 2^{kd} \sup_{x\in\R^d}  \int_{S^{d-1}} \frac{1}{(1+2^k|x-y|)^d}\, d\sigma(y) 
\end{align*}
It is clear that the supremum occurs whenever $x\in S^{d-1}$, and at $x=1$, in particular. We compute that
\begin{align}
\int_{S^{d-1}} \frac{1}{(1+2^k|1-y|)^N}\, d\sigma(y)  \nn
&\leq \int_{|1-y|< \frac 1{2^k}} d\sigma(y) + \sum_{j=0}^{k-1}\Big\{ \int_{\frac{2^j}{2^k} \leq|1-y|<\frac{2^{j+1}}{2^k}} \Big(2^k\frac{2^j}{2^k}\Big)^{-d}\,d\sigma(y)\Big\}
+ \frac{C}{2^{kd}} \\
&\leq \frac{C}{2^{k(d-1)}}.\label{eqn:eliminating alpha}
\end{align}
Thus, $\|T_k\|_{L^2\to L^2} \leq C 2^k$. We use the Riesz-Theorem Interpolation Theorem and bound 
$\|T_k\|_{L^p\to L^{p'}} \leq C A^{|\alpha|\frac{2-p}p} 2^{-\frac kp(2(d+1)-p(d+3))}$ Since $T^\alpha = \sum_{k=0}^\infty T_k$, this sum will converge
when $1 \leq p < \frac{2(d+1)}{d+3}$ and the theorem is proved.
\end{proof}

\bibliographystyle{alpha}
\bibliography{mybib}
\end{document}